\newcommand{\Ker}{\mathop{\mathrm{Ker}}}
\newcommand{\im}{\mathop{\mathrm{Im}}}
\newtheorem{dfn}{Definition}
\newtheorem{lem}{Lemma}
\newtheorem{thm}{Theorem}
\newtheorem{exm}{Example}
\newtheorem{rmk}{Remark}
\def \ocirc #1{{\accent'27#1}}\fi
\title{Why it is sufficient to consider only the case where the seed of linear cellular automata is $1$}
\author{Akane Kawaharada\thanks{
Address: 1, Fujinomoricho, Fukakusa, Fushimi-ku, Kyoto, 612-8522, JAPAN.  
E-mail: aka@kyokyo-u.ac.jp}
\\\vspace{2pt}
Department of Mathematics, Kyoto University of Education}
\date{\today}
\begin{document}
\maketitle
\begin{abstract}
When using a cellular automaton (CA) as a fractal generator, consider orbits from the single site seed, an initial configuration that gives only a single cell a positive value.
In the case of a two-state CA, since the possible states of each cell are $0$ or $1$, the ``seed" in the single site seed is uniquely determined to be the state $1$.
However, for a CA with three or more states, there are multiple candidates for the seed.
For example, for a $3$-state CA, the possible states of each cell are $0$, $1$, and $2$, so the candidates for the seed are $1$ and $2$.
For a $4$-state CA, the possible states of each cell are $0$, $1$, $2$, and $3$, so the candidates for the seed are $1$, $2$, and $3$.
Thus, as the number of possible states of a CA increases, the number of seed candidates also increases.
In this paper, we prove that for linear CAs it is sufficient to consider only the orbit from the single site seed with the seed $1$.
\end{abstract}

\hspace{2.5mm} 
{\it Keywords} : cellular automaton, group action, fractal: \footnote{AMS subject classifications: $37B15$, $68Q80$, $05E18$, $28A80$}


\section{Introduction}
\label{sec:int}

Research on cellular automata (CAs) is diverse.
Research is being conducted on a wide range of topics, including analysis as topological dynamical systems \cite{hedlund1969, blanchard1997, kurka2001}, research as fractal generators \cite{amoroso1971, ostrand1971, willson1984}, and research as mathematical models for biological phenomena, chemical phenomena, social phenomena, etc. \cite{coombes2009, gerhardt1989, fuks1997, matsumoto1998}.

\begin{figure}[htbp]
\centering
\includegraphics[width=.45\linewidth]{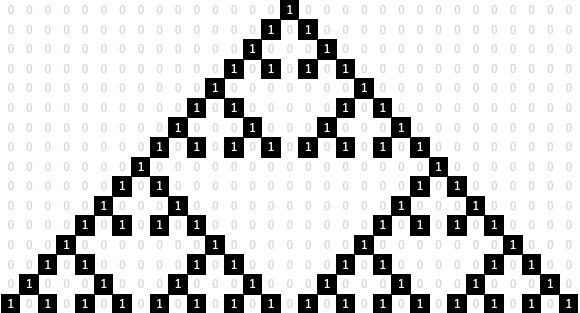}
\caption{Spatio-temporal pattern of a $2$-state linear CA from the single site seed.}
\label{fig:sp1}
\end{figure}

In studies using CAs as fractal generators, orbits from initial configurations with a finite number of cells given positive values have generally been studied.
The simplest initial configuration is the single site seed, where only a single cell is given a positive value, and the studies of the orbits from the single site seeds are fundamental but important \cite{takahashi1992, haeseler1993, wolfram2002}.
In the case of a $2$-state CA, since the possible states of each cell are $0$ and $1$, the seed in the single site seed is uniquely determined to be $1$ (e.g. Figure~\ref{fig:sp1}).
However, for a CA with three or more states, there are multiple candidates for the state that the seed can take.
For example, for a $3$-state CA, the possible states of each cell are $0$, $1$, and $2$, so there are two candidates, $1$ or $2$, for the seed. 
For a $4$-state CA, the possible states of each cell are $0$, $1$, $2$, and $3$, so the candidates of the seed are $1$, $2$, or $3$. 
Thus, there are many candidates for the seed in a multi-state CA, but in previous studies, the seed is often considered only in the case of $1$.

In this study, we prove that it is sufficient to consider the orbit only when the seed of the single site seed is $1$ for linear CAs.
The proof will be divided into the following three cases.
First, when the number of states is prime, we show that the spatio-temporal patterns of a linear CA from the single site seed are all isomorphic regardless of seed.
Next, when the number of states is composite, the proof will be divided into the following two cases.
If the number of states is composite and the seed is relatively prime to the number of states, we show that their spatio-temporal patterns are isomorphic to each other even if the seed is switched.
If the number of states is composite and the seed is not relatively prime to the number of states, then the spatio-temporal pattern is isomorphic to a spatio-temporal pattern of a CA with the smaller number of states.
Integrating these three cases, we can say that any spatio-temporal pattern of any seed with any number of states is isomorphic to a spatio-temporal pattern of the seed $1$ with some number of states.

The remainder of this paper is organized as follows.
In Section~\ref{sec:int}, the background and motivation for this study are described.
Section~\ref{sec:pre} gives definitions and notations on linear CAs and the group theory that is necessary for the proof of the main result.
Section~\ref{sec:main} gives the main result. 
We divide the three cases according to the number of states and the seed of linear CAs, prove each case, and derive the main theorem.
In Section~\ref{sec:con}, we give a summary and discuss future works.

\section{Preliminaries}
\label{sec:pre}

In this paper, we analyze the asymptotic behavior of orbits of linear CAs based on group theory. 
Section~\ref{subsec:def_group} gives notations and definitions in the group theory, and Section~\ref{subsec:def_ca} gives the notations and definitions of CAs.

\subsection{Notations and definitions of groups}
\label{subsec:def_group}

Let $n \in {\mathbb Z}_{\geq 2}$.
Let ${\mathbb Z} / n {\mathbb Z} = \{0, 1, \ldots, n-1\}$ be the set of the residue classes of integers modulo $n$, and it forms an additive group, because $(i)$ there exists the identity element $0$, $(ii)$ for each $a \in {\mathbb Z} / n{\mathbb Z}$, there exists the inverse element denoted by $n-a \pmod n$, and $(iii)$ the associative law is satisfied.
The additive group ${\mathbb Z} / n {\mathbb Z}$ is a cyclic group of order $n$ under addition.
Since a cyclic group is an abelian group, the additive tables of the group ${\mathbb Z} / n {\mathbb Z}$ (for example, see Tables~\ref{tab:mod3} and \ref{tab:mod5}) are line symmetric with respect to the diagonal.
It is known that any subgroup of ${\mathbb Z} / n {\mathbb Z}$ is also a cyclic group, and for each divisor $r$ of finite $n$, there exists just one subgroup of order $r$.

If the representative element of the residue class is relatively prime to $n \in {\mathbb Z}_{\geq 2}$, it is called the irreducible residue class modulo $n$, and the set of them is written as $({\mathbb Z} / n {\mathbb Z})^{\times}$.
Since it forms a group under multiplication, we call it the irreducible residue class group modulo $n$.
Because the generators of the additive group ${\mathbb Z} / n {\mathbb Z}$ are elements relatively prime to $n$, the elements in $({\mathbb Z} / n {\mathbb Z})^{\times}$ are the generators of the additive group ${\mathbb Z} / n {\mathbb Z}$.

\subsection{Notations and definitions of CAs}
\label{subsec:def_ca}

Here, we consider linear CAs. 
Let the dimension $D$ of a CA be any finite positive integer, and the number of states $n$ be any finite integer greater than or equal to $2$.

\begin{dfn}
\label{dfn:ca}
Let $m$ be a finite positive integer, $c_j$ be a coefficient in ${\mathbb Z}$, and ${\boldsymbol v}_j$ be a vector on ${{\mathbb Z}^D}$ for $1 \leq j \leq m$.
A transition rule $T$ of a $D$-dimensional $n$-state linear CA $(({\mathbb Z} / n {\mathbb Z})^{{\mathbb Z}^D}, T)$ is given by
\begin{align}
(T u)_{\boldsymbol i} = \sum_{j=1}^m c_j u_{{\boldsymbol i}+ {\boldsymbol v}_j} \quad 
\end{align} 
for $u = \{u_{\boldsymbol i}\}_{{\boldsymbol i} \in {{\mathbb Z}^D}} \in ({\mathbb Z} / n {\mathbb Z})^{{\mathbb Z}^D}$.
\end{dfn}

\begin{rmk}
\label{rmk:gr}
The transition rule $T$ of a linear CA $(({\mathbb Z} / n {\mathbb Z})^{{\mathbb Z}^D}, T)$ can be regarded as a finite number of iterations of additive operations for the state $u_{\boldsymbol i}$ on the additive group ${\mathbb Z} / n {\mathbb Z}$.
More specifically, it can be regarded as repeating $\sum_{j=1}^m c_j$ times additive operations for states at neighboring cells ${\boldsymbol i}+ {\boldsymbol v}_j$ for $1 \leq j \leq m$.
Thus, the discussion on a transition rule of a linear CA can be replaced by the discussion on operations on the additive group ${\mathbb Z} / n {\mathbb Z}$.
\end{rmk}

Suppose that $T^t$ is a $t$ times composition of the transformation $T$.
In particular, $T^0 u=u$.
We give a configuration $u_{\langle a \rangle} \in ({\mathbb Z} / n {\mathbb Z})^{{\mathbb Z}^D}$ for $a \in ({\mathbb Z} / n {\mathbb Z}) \backslash \{0\}$ by
\begin{align}
( u_{\langle a \rangle} )_{\boldsymbol i} =
\left\{
\begin{array}{l l}
a & \mbox{if } {\boldsymbol i} = {\boldsymbol 0} \coloneqq \{0, 0, \ldots, 0\} \in {{\mathbb Z}^D},\\
0 & \mbox{otherwise}.
\end{array}
\right. 
\end{align}
The state $a$ with ${\boldsymbol i} = {\boldsymbol 0}$ is called a seed and we call the configuration $u_{\langle a \rangle}$ the single site seed with $a$.
In this study, we consider the orbits from the single site seed with $a$ of linear CAs.

\begin{dfn}
\label{dfn:iso}
For $n, \hat{n} \in {\mathbb Z}_{\geq 2}$, let $(({\mathbb Z} / n {\mathbb Z})^{{\mathbb Z}^D}, T)$ be a $D$-dimensional $n$-state linear CA, and let $(({\mathbb Z} / \hat{n} {\mathbb Z})^{{\mathbb Z}^D}, T)$ be a $D$-dimensional $\hat{n}$-state linear CA.
Note that these transition rules $T$ are the same.
We consider the two spatio-temporal patterns of the CAs from the single site seed with $a \in ({\mathbb Z} / n {\mathbb Z}) \backslash \{0\}$ and $\hat{a} \in ({\mathbb Z} / \hat{n} {\mathbb Z}) \backslash \{0\}$, denoted by $\{T^t u_{\langle a \rangle}\}_{t=0}^{\infty}$ and $\{T^t u_{\langle \hat{a} \rangle}\}_{t=0}^{\infty}$, respectively.
Let $\Delta_T(n, a)$ be a subset of ${\mathbb Z} / n {\mathbb Z}$ given by 
\begin{align}
\Delta_T(n, a) := \{ b \in {\mathbb Z} / n {\mathbb Z} \mid \mbox{There exists $({\boldsymbol i}, t) \in ({\mathbb Z}^D, {\mathbb Z}_{\geq 0})$ such that $(T^t u_{\langle a \rangle})_{\boldsymbol i} = b$} \}.
\end{align}
If there exists an isomorphism $f : \Delta_T(n, a) \to \Delta_T(\hat{n}, \hat{a})$ such that $f (T^t u_{\langle a \rangle})_{\boldsymbol i} = ( T^t u_{\langle \hat{a} \rangle})_{\boldsymbol i}$ for any site ${\boldsymbol i} \in {\mathbb Z}^D$ and any time step $t \in {\mathbb Z}_{\geq 0}$, then we call $\{T^t u_{\langle a \rangle}\}_{t=0}^{\infty}$ and $\{T^t u_{\langle \hat{a} \rangle}\}_{t=0}^{\infty}$ isomorphic, and we denote their relationship by 
\begin{align}
S_T(n, a) \cong S_T (\hat{n}, \hat{a}).
\end{align}
\end{dfn}


\section{Main results}
\label{sec:main}

In this section, we prove that it is sufficient to consider only when the seed is $1$ for the spatio-temporal patterns of linear CAs.

As shown in Figure~\ref{fig:flow}, the proof will be divided into three cases classified by whether the number of states $n$ of a linear CA is prime or not, and if not prime whether the seed $a$ is relatively prime to $n$ or not.
This section consists of the following four parts.
In Section~\ref{subsec:pri}, we show that for the case when the number of states $n$ of a linear CA is prime in Lemma~\ref{lem:main_pri}, their spatio-temporal patterns are isomorphic, no matter what the seeds are.
In Section~\ref{subsec:com1}, we show that spatio-temporal patterns are isomorphic when the number of states $n$ of a linear CA is composite and the seed $a$ is relatively prime to $n$ in Lemma~\ref{lem:main_com1} .
In section~\ref{subsec:com2}, we show that in Lemma~\ref{lem:main_com2}, when the number of states $n$ of a linear CA is composite and the seed $a$ is not relatively prime to $n$, the spatio-temporal pattern is isomorphic to the spatio-temporal pattern of a linear CA with the same transition rule when the number of state is $n/\gcd(n, a)$ and the seed is $a/\gcd(n, a)$, where $\gcd(n, a)$ denotes the greatest common divisor of $n$ and $a$.
Section~\ref{subsec:all} summarizes the results of the above three cases to obtain Theorem~\ref{thm:main}.
Theorem~\ref{thm:main} shows that for any integer $n$ greater than or equal to $2$ and any positive integer $a$ less than $n$, the spatio-temporal pattern of a $n$-state linear CA from the initial configuration with the seed $a$ is isomorphic to the spatio-temporal pattern of a $(n/\gcd(n, a))$-state linear CA from the initial configuration with the seed $1$.

\begin{figure}[htb]
\centering
\includegraphics[width=1.\linewidth]{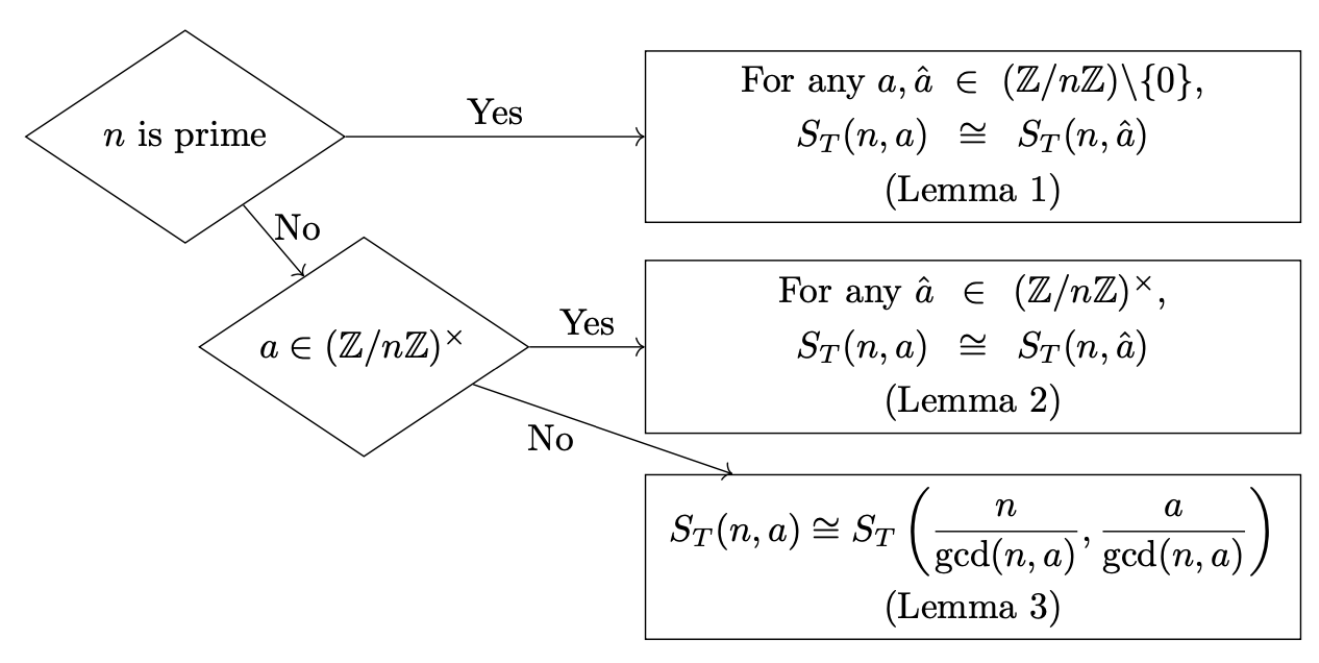}
\caption{Branching of proofs by the number of states $n$ and the seed $a$}
\label{fig:flow}
\end{figure}

\subsection{When the number of states is prime}
\label{subsec:pri}

Here, we consider the spatio-temporal patterns of linear CAs when the number of states is prime.
An example shown in Figure~\ref{fig:sp3} is an expansion of the one-dimensional elementary CA Rule $90$ \cite{wolfram2002} to the case of $3$ number of states. 
The transition rule is given by $(T u)_i = u_{i-1} + u_{i+1}$ on $(\mathbb Z/ 3 \mathbb Z)^{\mathbb Z}$, where the possible states of each cell are $0$, $1$ or $2$.
Figure~\ref{fig:sp3}~$(a)$ shows the spatio-temporal pattern when the seed $a$ is $1$, and Figure~\ref{fig:sp3}~$(b)$ shows the spatio-temporal pattern when the seed is $2$.
In these patterns, not only the sites of cells that take positive states match, but it can be seen that the states $1$ and $2$ are completely switched.
Figure~\ref{fig:sp5} shows the spatio-temporal patterns when the number of states is $5$ with the same transition rule as in Figure~\ref{fig:sp3}.
In this case, the possible states of each cell are $0$, $1$, $2$, $3$, or $4$.
Figure~\ref{fig:sp5}~$(a)$ through \ref{fig:sp5}~$(d)$ show the patterns when the seed $a$ is $1$, $2$, $3$, and $4$, respectively.
We can see that the sites of positive-valued cells match in the four patterns, and positive states are switched.
We will see the details of which states are swapped in Example~\ref{exm:lem1_5}, but for example, the states $1$, $2$, $3$, and $4$ in Figure~\ref{fig:sp5}~$(a)$ at seeded $1$ are swapped to $2$, $4$, $1$, and $3$  in Figure~\ref{fig:sp5}~$(b)$ at seeded $2$, respectively. 
Thus, we will show in Lemma~\ref{lem:main_pri} that the patterns consisting of positive-valued cells of a linear CA with prime number of states are invariant when the seed is changed, and that in the spatio-temporal pattern the positive-valued states are swapped.

\begin{figure}[htbp]
\begin{minipage}[b]{0.45\linewidth}
\centering
\includegraphics[width=1.\linewidth]{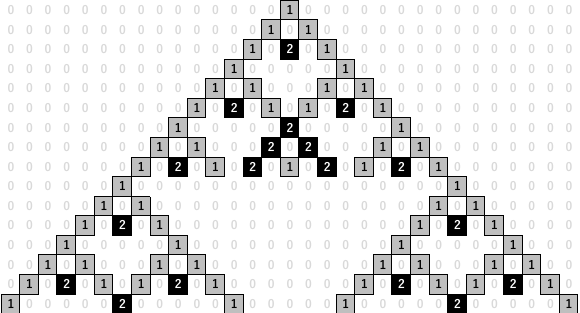}\\
$(a)$ when $a=1$
\end{minipage}
\begin{minipage}[b]{0.05\linewidth}
\quad
\end{minipage}
\begin{minipage}[b]{0.45\linewidth}
\centering
\includegraphics[width=1.\linewidth]{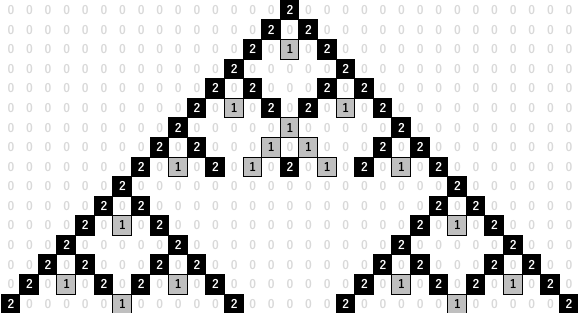}\\
$(b)$ when $a=2$
\end{minipage}
\caption{Spatio-temporal patterns $\{T^t u_{\langle a \rangle}\}_{t=0}^{15}$ from the initial configuration $u_{\langle a \rangle}$ of a $3$-state linear CA given by $(T u)_i = u_{i-1} + u_{i+1}$ on $(\mathbb Z/ 3 \mathbb Z)^{\mathbb Z}$.}
\label{fig:sp3}
\end{figure}

\begin{figure}[htbp]
\begin{minipage}[b]{0.45\linewidth}
\centering
\includegraphics[width=1.\linewidth]{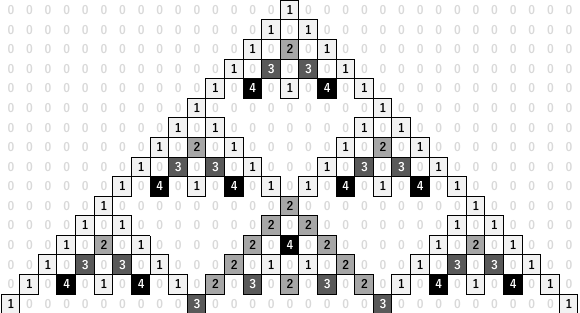}\\
$(a)$ when $a=1$
\end{minipage}
\begin{minipage}[b]{0.05\linewidth}
\quad
\end{minipage}
\begin{minipage}[b]{0.45\linewidth}
\centering
\includegraphics[width=1.\linewidth]{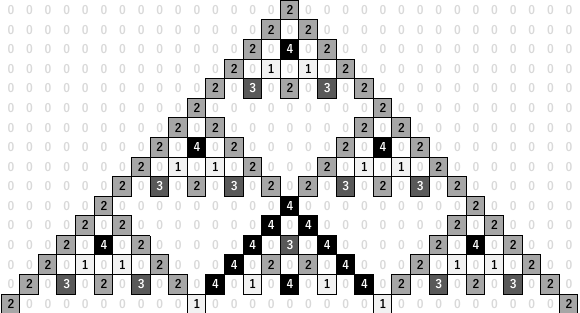}\\
$(b)$ when $a=2$
\end{minipage}\\
\quad \\
\begin{minipage}[b]{0.45\linewidth}
\centering
\includegraphics[width=1.\linewidth]{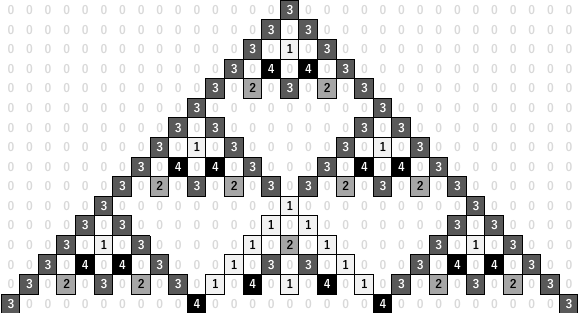}\\
$(c)$ when $a=3$
\end{minipage}
\begin{minipage}[b]{0.05\linewidth}
\quad
\end{minipage}
\begin{minipage}[b]{0.45\linewidth}
\centering
\includegraphics[width=1.\linewidth]{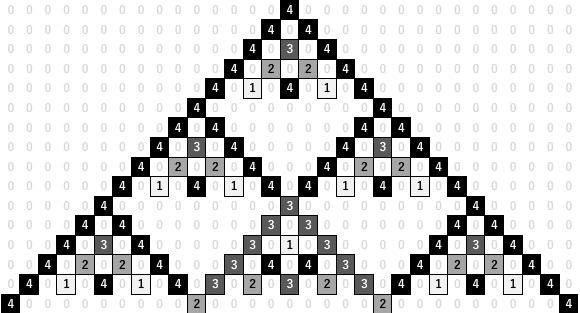}\\
$(d)$ when $a=4$
\end{minipage}
\caption{Spatio-temporal patterns $\{T^t u_{\langle a \rangle}\}_{t=0}^{15}$ from the initial configuration $u_{\langle a \rangle}$ of a $5$-state linear CA given by $(T u)_i = u_{i-1} + u_{i+1}$ on $(\mathbb Z/ 5 \mathbb Z)^{\mathbb Z}$.}
\label{fig:sp5}
\end{figure}

\begin{lem}
\label{lem:main_pri}
Let $p$ be a prime number.
For a $D$-dimensional $p$-state linear CA $(({\mathbb Z} / p {\mathbb Z})^{{\mathbb Z}^D}, T)$, 
\begin{align}
S_T(p, a) \cong S_T(p, \hat{a})
\end{align}
for any $a, \hat{a} \in ({\mathbb Z} / p {\mathbb Z}) \backslash \{0\}$.
\end{lem}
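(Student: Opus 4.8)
The plan is to exploit the linearity of $T$ together with the fact that, since $p$ is prime, $\mathbb{Z}/p\mathbb{Z}$ is a field in which every nonzero element has a multiplicative inverse. The guiding observation is that scaling the seed scales the entire orbit. Writing $\lambda u$ for the configuration obtained from a configuration $u$ by multiplying every entry by $\lambda \in \mathbb{Z}/p\mathbb{Z}$, the rule $(Tu)_{\boldsymbol i} = \sum_{j=1}^m c_j u_{{\boldsymbol i}+{\boldsymbol v}_j}$ of Definition~\ref{dfn:ca} satisfies $(T(\lambda u))_{\boldsymbol i} = \sum_{j=1}^m c_j (\lambda u)_{{\boldsymbol i}+{\boldsymbol v}_j} = \lambda (Tu)_{\boldsymbol i}$, i.e.\ $T$ commutes with multiplication by $\lambda$; iterating, $T^t(\lambda u) = \lambda\, T^t u$ for every $t \in \mathbb{Z}_{\ge 0}$. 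This is essentially the reformulation recorded in Remark~\ref{rmk:gr}.

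First I would fix $a, \hat a \in (\mathbb{Z}/p\mathbb{Z}) \setminus \{0\}$ and set $\lambda := \hat a\, a^{-1}$, where $a^{-1}$ denotes the multiplicative inverse of $a$ modulo $p$; this is the one place where primality of $p$ is used. Since $\lambda a = \hat a$, comparing the two single site seeds entry by entry gives $u_{\langle \hat a\rangle} = \lambda\, u_{\langle a\rangle}$, and then the scaling property above yields
\begin{align}
(T^t u_{\langle \hat a\rangle})_{\boldsymbol i} = \lambda\, (T^t u_{\langle a\rangle})_{\boldsymbol i}
\end{align}
for every site ${\boldsymbol i} \in \mathbb{Z}^D$ and every time step $t \in \mathbb{Z}_{\ge 0}$.

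Next I would define $f\colon \Delta_T(p,a) \to \mathbb{Z}/p\mathbb{Z}$ by $f(x) := \lambda x$ and verify that it is an isomorphism in the sense of Definition~\ref{dfn:iso}. By the definition of $\Delta_T$ and the displayed identity, the image $f(\Delta_T(p,a))$ equals $\{\lambda b : b \in \Delta_T(p,a)\} = \Delta_T(p,\hat a)$; and $f$ is injective because $\lambda \ne 0$ in the field $\mathbb{Z}/p\mathbb{Z}$, hence $f$ is a bijection $\Delta_T(p,a) \to \Delta_T(p,\hat a)$. Moreover the displayed identity is exactly the statement that $f\big((T^t u_{\langle a\rangle})_{\boldsymbol i}\big) = (T^t u_{\langle \hat a\rangle})_{\boldsymbol i}$ for all ${\boldsymbol i}$ and $t$. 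Therefore $S_T(p,a) \cong S_T(p,\hat a)$, which is the assertion of the lemma.

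I do not anticipate a serious obstacle: the entire argument rests on the single structural fact that multiplication by a nonzero constant is a well-defined bijection of $\mathbb{Z}/p\mathbb{Z}$, which holds precisely because $p$ is prime. The only points requiring care are bookkeeping ones --- confirming that the ``isomorphism'' of Definition~\ref{dfn:iso} is just a bijection of the finite state sets intertwining the two orbits (which $f$ plainly is), and noting that the scaling property $T^t(\lambda u) = \lambda T^t u$ uses only additivity and so holds for every $n$, whereas invertibility of $a$ does not. That last point is exactly why, for composite $n$ with $\gcd(n,a) > 1$, the map $x \mapsto a x$ ceases to be injective and a separate treatment is required, as carried out in Lemmas~\ref{lem:main_com1} and~\ref{lem:main_com2}.
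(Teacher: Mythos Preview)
Your argument is correct and follows essentially the same route as the paper: both proofs take the map $x \mapsto \lambda x$ (the paper writes $f_k(b)=kb$) and use primality of $p$ to conclude it is a bijection/automorphism of $\mathbb{Z}/p\mathbb{Z}$, then invoke linearity of $T$ to obtain the intertwining relation. If anything, your write-up is more explicit than the paper's about the key commutation $T^t(\lambda u)=\lambda\,T^t u$, which the paper leaves implicit via Remark~\ref{rmk:gr}.
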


\begin{proof}
Recall that ${\mathbb Z} / p {\mathbb Z}$ is a cyclic group. 
Since $p$ is a prime number, all elements of ${\mathbb Z} / p {\mathbb Z}$ except the identity element $0$ are generators of the group, and their orders are $p$.
This fact guarantees that when the seed $a$ is any state other than $0$, all states in ${\mathbb Z} / p {\mathbb Z}$ can appear in the spatio-temporal pattern $\{T^t u_{\langle a \rangle}\}_{t=0}^{\infty}$. 
(Note that we are not claiming that all states must appear.)

Let $k \in ({\mathbb Z} / p {\mathbb Z}) \backslash \{0\}$.
We define a map $f_k : {\mathbb Z} / p {\mathbb Z} \to {\mathbb Z} / p {\mathbb Z}$ by $f_k (b) = k b$ for $b \in {\mathbb Z} / p {\mathbb Z}$. 
When $k=1$, it is just the identity, $f_k (b) = b$.
Because $f_k (b+\hat{b}) = k (b+\hat{b}) = kb + k\hat{b} = f_k(b) + f_k(\hat{b})$ for $b, \hat{b} \in {\mathbb Z} / p {\mathbb Z}$, $f_k$ is a homomorphism on ${\mathbb Z} / p {\mathbb Z}$.
Because ${\mathbb Z} / p {\mathbb Z}$ is a simple group, it is easy to see that $\Ker f_k = \{0\}$ and $\im f_k = {\mathbb Z} / p {\mathbb Z}$, and then $f_k$ is a group automorphism.

Therefore, for $\hat{a} \in ({\mathbb Z} / p {\mathbb Z}) \backslash \{0\}$, we can take $a \in ({\mathbb Z} / p {\mathbb Z}) \backslash \{0\}$ such that $(T^t u_{\langle a \rangle})_{\boldsymbol i} = f_k (T^t u_{\langle \hat{a} \rangle})_{\boldsymbol i}$ for any site ${\boldsymbol i} \in {\mathbb Z}^D$ and any time step $t \in {\mathbb Z}_{\geq 0}$.
\end{proof}

We will see specifically at the results shown by Lemma~\ref{lem:main_pri} for the cases of $3$ and $5$ number of states.

\begin{exm}[When $n=3$]
\label{exm:lem1_3}

Table~\ref{tab:f_k3}~$(a)$ shows the generation orders of the additive group ${\mathbb Z} / 3 {\mathbb Z}$ by the generators, $1$ and $2$.
From generator $1$, we have a generation order $1 \mapsto 2 \mapsto 0$, and from generator $2$, we have a generation order $2 \mapsto 1 \mapsto 0$. 
Table~\ref{tab:f_k3}~$(b)$ shows the values of $f_a$ on ${\mathbb Z} / 3 {\mathbb Z}$.
Given $b$ in generation order $1 \mapsto 2 \mapsto 0$, the values of $f_a(b)$ for them form the generation order of ${\mathbb Z} / 3 {\mathbb Z}$ for $a$ as a generator.
When $a=1$, $(f_1 (1), f_1 (2), f_1 (0)) = (1, 2, 0)$ and when $a=2$, $(f_2 (1), f_2 (2), f_2 (0)) = (2, 1, 0)$.
Hence, we can confirm that these orders by $f_a$ correspond to the generation order by each generator $a$.

\begin{table}[htb]
\caption{Generation orders of the group ${\mathbb Z} / 3 {\mathbb Z}$ by generator $a \in ({\mathbb Z} / 3 {\mathbb Z})^{\times}$ and values of $f_a$ on ${\mathbb Z} / 3 {\mathbb Z}$}
\label{tab:f_k3}
\quad \\
\begin{minipage}[c]{0.45\hsize}
\centering
$(a)$ Generation orders of ${\mathbb Z} / 3 {\mathbb Z}$ by $a$\\
\vspace{2mm}
\begin{tabular}{c l}
\hline
$a$ & generation order\\
\hline
$1$ & $1$ $\mapsto$ $2$ $\mapsto$ $0$\\
$2$ & $2$ $\mapsto$ $1$ $\mapsto$ $0$\\
\hline
\end{tabular}
\end{minipage}
\begin{minipage}[c]{0.05\hsize}
\quad
\end{minipage}
\begin{minipage}[c]{0.45\hsize}
\centering
$(b)$ Values of $f_a$ for $b \in {\mathbb Z} / 3 {\mathbb Z}$\\
\vspace{2mm}
\begin{tabular}{c | c c c}
\hline
$b$ & $1$ & $2$ & $0$\\
$f_1(b)$ & $1$ & $2$ & $0$\\
$f_2(b)$ & $2$ & $1$ & $0$\\
\hline
\end{tabular}
\end{minipage}
\end{table}

Next, we consider the additive tables of ${\mathbb Z} / 3 {\mathbb Z}$ where elements are ordered by the generation order.
Table~\ref{tab:mod3}~$(a)$ is the additive table arranged in the order of generation from generator $1$, and Table~\ref{tab:mod3}~$(b)$ is the additive table arranged in the order of generation from generator $2$.
Because $f_a$ for $a \in ({\mathbb Z} / 3 {\mathbb Z})^{\times}$ is a group automorphism, comparing these two tables, we can notice that the identity element $0$ appears the same places, while the other elements $1$ and $2$ are switched.

Therefore, Lemma~\ref{lem:main_pri} shows that the states $1$ and $2$ in the spatio-temporal pattern when the seed is $1$ are replaced by the states $2$ and $1$ in the spatio-temporal pattern when the seed is $2$, respectively.

\begin{table}[htb]
\caption{Additive tables of ${\mathbb Z} / 3 {\mathbb Z}$}
\label{tab:mod3}
\quad \\
\begin{minipage}[c]{0.45\hsize}
\centering
$(a)$ Table arranged in the order of generation from generator $1$\\
\vspace{2mm}
\begin{tabular}{c| c c c}
$+$ & $1$ & $2$ & $0$\\
\hline
$1$ & $2$ & $0$ & $1$\\
$2$ & $0$ & $1$ & $2$\\
$0$ & $1$ & $2$ & $0$\\
\end{tabular}
\end{minipage}
\begin{minipage}[c]{0.05\hsize}
\quad
\end{minipage}
\begin{minipage}[c]{0.45\hsize}
\centering
$(b)$ Table arranged in the order of generation from generator $2$\\
\vspace{2mm}
\begin{tabular}{c| c c c}
$+$ & $2$ & $1$ & $0$\\
\hline
$2$ & $1$ & $0$ & $2$\\
$1$ & $0$ & $2$ & $1$\\
$0$ & $2$ & $1$ & $0$\\
\end{tabular}
\end{minipage}
\end{table}
\end{exm}

\begin{exm}[When $n=5$]
\label{exm:lem1_5}

Table~\ref{tab:f_k5}~$(a)$ shows the generation orders of the additive group ${\mathbb Z} / 5 {\mathbb Z}$ by generator $a \in ({\mathbb Z} / 5 {\mathbb Z})^{\times} = \{1, 2, 3, 4\}$.
Values of $f_a$ on ${\mathbb Z} / 5 {\mathbb Z}$ are given in Table~\ref{tab:f_k5}~$(b)$, and the order of $b$ in the first row is given by the generation order from the generator $1$.
Comparing these two tables, the generation order from $a$ completely matches the sequence $(f_a(1), f_a(2), f_a(3), f_a(4), f_a(0))$. 
Hence, we can see that the generation order from generator is given by $f_a$ for each element of the generation order from $1$.

\begin{table}[htb]
\caption{Generation orders of the group ${\mathbb Z} / 5 {\mathbb Z}$ by generator $a \in ({\mathbb Z} / 5 {\mathbb Z})^{\times}$ and values of $f_a$ on ${\mathbb Z} / 5 {\mathbb Z}$}
\label{tab:f_k5}
\quad \\
\begin{minipage}[c]{0.45\hsize}
\centering
$(a)$ Generation orders of ${\mathbb Z} / 5 {\mathbb Z}$ by $a$\\
\vspace{2mm}
\begin{tabular}{c l}
\hline
$a$ & generation order\\
\hline
$1$ & $1$ $\mapsto$ $2$ $\mapsto$ $3$ $\mapsto$ $4$ $\mapsto$ $0$\\
$2$ & $2$ $\mapsto$ $4$ $\mapsto$ $1$ $\mapsto$ $3$ $\mapsto$ $0$\\
$3$ & $3$ $\mapsto$ $1$ $\mapsto$ $4$ $\mapsto$ $2$ $\mapsto$ $0$\\
$4$ & $4$ $\mapsto$ $3$ $\mapsto$ $2$ $\mapsto$ $1$ $\mapsto$ $0$\\
\hline
\end{tabular}
\end{minipage}
\begin{minipage}[c]{0.05\hsize}
\quad
\end{minipage}
\begin{minipage}[c]{0.45\hsize}
\centering
$(b)$ Values of $f_a$ for $b \in {\mathbb Z} / 5 {\mathbb Z}$\\
\vspace{2mm}
\begin{tabular}{c| c c c c c}
\hline
$b$ & $1$ & $2$ & $3$ & $4$ & $0$\\
$f_1(b)$ & $1$ & $2$ & $3$ & $4$ & $0$\\
$f_2(b)$ & $2$ & $4$ & $1$ & $3$ & $0$\\
$f_3(b)$ & $3$ & $1$ & $4$ & $2$ & $0$\\
$f_4(b)$ & $4$ & $3$ & $2$ & $1$ & $0$\\
\hline
\end{tabular}
\end{minipage}
\end{table}

Table~\ref{tab:mod5} shows the additive tables of ${\mathbb Z} / 5 {\mathbb Z}$ where elements are ordered by the generation order.
Table~\ref{tab:mod5}~$(a)$, $(b)$, $(c)$, and $(d)$ correspond to the cases where the generators are $1$, $2$, $3$, and $4$, respectively.
In these four tables, the identity element $0$ appears at the same sites, and positive states are replaced by other positive states, because $f_a$ for $a \in ({\mathbb Z} / 5 {\mathbb Z})^{\times}$ is a group automorphism.
The states $(1, 2, 3, 4)$ in Table~\ref{tab:mod5}~$(a)$ are replaced by $(2, 4, 1, 3)$ in Table~\ref{tab:mod5}~$(b)$, $(3, 1, 4, 2)$ in Table~\ref{tab:mod5}~$(c)$ and $(4, 3, 2, 1)$ in Table~\ref{tab:mod5}~$(d)$.

Thus, for the spatio-temporal patterns of $5$-state linear CAs, the sites of the positive-valued cells are invariant when the seed of the initial configuration is replaced, indicating that the states are swapped by $f_a$.

\begin{table}[htb]
\caption{Additive tables of ${\mathbb Z} / 5 {\mathbb Z}$}
\label{tab:mod5}
\quad \\
\begin{minipage}[c]{0.45\hsize}
\centering
$(a)$ Table arranged in the order of generation from generator $1$\\
\vspace{2mm}
\begin{tabular}{c| c c c c c}
$+$ & $1$ & $2$ & $3$ & $4$ & $0$\\
\hline
$1$ & $2$ & $3$ & $4$ & $0$ & $1$\\
$2$ & $3$ & $4$ & $0$ & $1$ & $2$\\
$3$ & $4$ & $0$ & $1$ & $2$ & $3$\\
$4$ & $0$ & $1$ & $2$ & $3$ & $4$\\
$0$ & $1$ & $2$ & $3$ & $4$ & $0$\\
\end{tabular}
\end{minipage}
\begin{minipage}[c]{0.05\hsize}
\quad
\end{minipage}
\begin{minipage}[c]{0.45\hsize}
\centering
$(b)$ Table arranged in the order of generation from generator $2$\\
\vspace{2mm}
\begin{tabular}{c| c c c c c}
$+$ & $2$ & $4$ & $1$ & $3$ & $0$\\
\hline
$2$ & $4$ & $1$ & $3$ & $0$ & $2$\\
$4$ & $1$ & $3$ & $0$ & $2$ & $4$\\
$1$ & $3$ & $0$ & $2$ & $4$ & $1$\\
$3$ & $0$ & $2$ & $4$ & $1$ & $3$\\
$0$ & $2$ & $4$ & $1$ & $3$ & $0$\\
\end{tabular}
\end{minipage}
\\ \quad \\ \quad \\
\begin{minipage}[c]{0.45\hsize}
\centering
$(c)$ Table arranged in the order of generation from generator $3$\\
\vspace{2mm}
\begin{tabular}{c| c c c c c}
$+$ & $3$ & $1$ & $4$ & $2$ & $0$\\
\hline
$3$ & $1$ & $4$ & $2$ & $0$ & $3$\\
$1$ & $4$ & $2$ & $0$ & $3$ & $1$\\
$4$ & $2$ & $0$ & $3$ & $1$ & $4$\\
$2$ & $0$ & $3$ & $1$ & $4$ & $2$\\
$0$ & $3$ & $1$ & $4$ & $2$ & $0$\\ 
\end{tabular}
\end{minipage}
\begin{minipage}[c]{0.05\hsize}
\quad
\end{minipage}
\begin{minipage}[c]{0.45\hsize}
\centering
$(d)$ Table arranged in the order of generation from generator $4$\\
\vspace{2mm}
\begin{tabular}{c| c c c c c}
$+$ & $4$ & $3$ & $2$ & $1$ & $0$\\
\hline
$4$ & $3$ & $2$ & $1$ & $0$ & $4$\\
$3$ & $2$ & $1$ & $0$ & $4$ & $3$\\
$2$ & $1$ & $0$ & $4$ & $3$ & $2$\\
$1$ & $0$ & $4$ & $3$ & $2$ & $1$\\
$0$ & $4$ & $3$ & $2$ & $1$ & $0$\\ 
\end{tabular}
\end{minipage}
\end{table}
\end{exm}


\subsection{When the number of states is a composite number, and the seed is relatively prime to the number of states}
\label{subsec:com1}

In Sections~\ref{subsec:com1} and \ref{subsec:com2}, the case where the number of states of a linear CA is a composite number are considered.
First, in Section~\ref{subsec:com1}, we consider the case where the number of states is composite and the seed is relatively prime to the number of states.
Figure~\ref{fig:sp4} shows spatio-temporal patterns of a $4$-state linear CA, whose transition rule is given by $(T u)_i = u_{i-1} + u_{i+1}$ on $(\mathbb Z/ 4 \mathbb Z)^{\mathbb Z}$, and their seeds are $1$, $2$, and $3$.
In this subsection, we will discuss the cases where the seeds are $1$ and $3$ for this example.
In the case of seed $1$ and $3$, it can be seen that the patterns consisting of positive-valued cells are the same, and observing the details, states $1$ and $3$ are switched.
Figure~\ref{fig:sp6} shows another example, spatio-temporal patterns of a $6$-state linear CA for seed $1$, $2$, $3$, $4$, and $5$ with the same transition rule as in Figure~\ref{fig:sp4}.
The cases where the seed is $1$ and $5$ are discussed in this subsection.
These patterns consisting of positive-valued cells match, and replacements of states other than $0$ has occurred.
The following lemma shows that the spatio-temporal patterns are isomorphic to each other when the number of states is composite and the seed is relatively prime to the number of states, in a manner similar to the lemma for the previous prime number of states.

\begin{figure}[htbp]
\begin{minipage}[b]{0.45\linewidth}
\centering
\includegraphics[width=1.\linewidth]{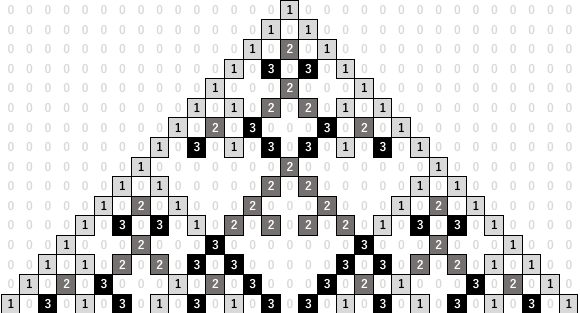}\\
$(a)$ when $a=1$
\end{minipage}
\begin{minipage}[b]{0.05\linewidth}
\quad
\end{minipage}
\begin{minipage}[b]{0.45\linewidth}
\centering
\includegraphics[width=1.\linewidth]{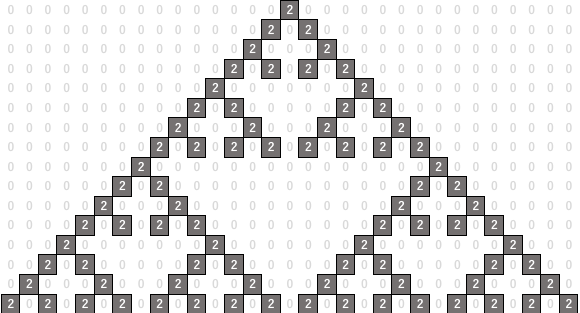}\\
$(b)$ when $a=2$
\end{minipage}\\
\quad \\
\centering
\includegraphics[width=.45\linewidth]{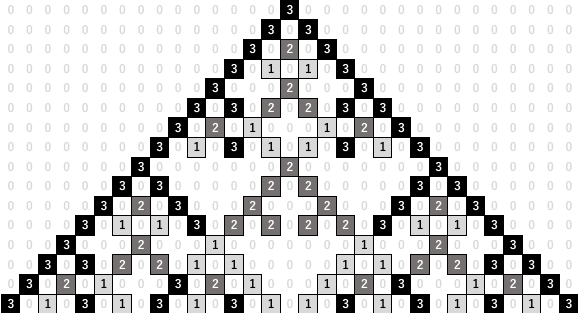}\\
$(c)$ when $a=3$
\caption{Spatio-temporal patterns $\{T^t u_{\langle a \rangle}\}_{t=0}^{15}$ from the initial configuration $u_{\langle a \rangle}$ of a $4$-state linear CA given by $(T u)_i = u_{i-1} + u_{i+1}$ on $(\mathbb Z/ 4 \mathbb Z)^{\mathbb Z}$.}
\label{fig:sp4}
\end{figure}

\begin{figure}[htbp]
\begin{minipage}[b]{0.45\linewidth}
\centering
\includegraphics[width=1.\linewidth]{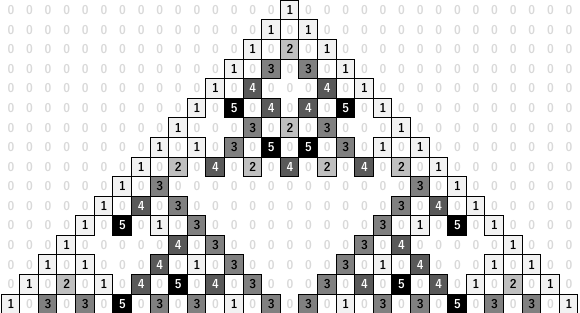}\\
$(a)$ when $a=1$
\end{minipage}
\begin{minipage}[b]{0.05\linewidth}
\quad
\end{minipage}
\begin{minipage}[b]{0.45\linewidth}
\centering
\includegraphics[width=1.\linewidth]{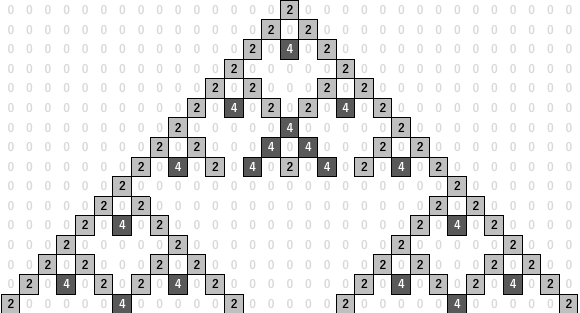}\\
$(b)$ when $a=2$
\end{minipage}\\
\quad \\
\begin{minipage}[b]{0.45\linewidth}
\centering
\includegraphics[width=1.\linewidth]{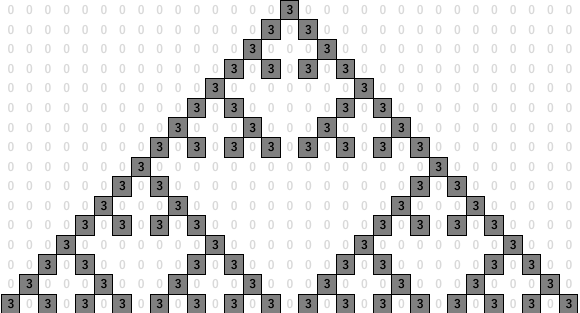}\\
$(c)$ when $a=3$
\end{minipage}
\begin{minipage}[b]{0.05\linewidth}
\quad
\end{minipage}
\begin{minipage}[b]{0.45\linewidth}
\centering
\includegraphics[width=1.\linewidth]{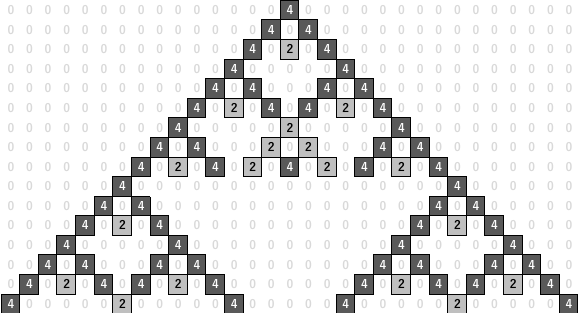}\\
$(d)$ when $a=4$
\end{minipage}\\
\quad \\
\centering
\includegraphics[width=.45\linewidth]{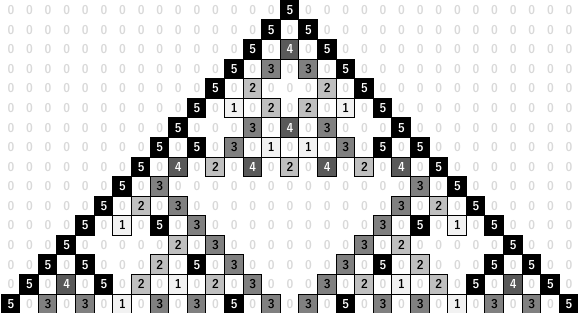}\\
$(e)$ when $a=5$
\caption{Spatio-temporal patterns $\{T^t u_{\langle a \rangle}\}_{t=0}^{15}$ from the initial configuration $u_{\langle a \rangle}$ of a $6$-state linear CA given by $(T u)_i = u_{i-1} + u_{i+1}$ on $(\mathbb Z/ 6 \mathbb Z)^{\mathbb Z}$.}
\label{fig:sp6}
\end{figure}

\begin{lem}
\label{lem:main_com1}
Let $q \in {\mathbb Z}_{> 0}$ be a composite number.
For a $D$-dimensional $q$-state linear CA $(({\mathbb Z} / q {\mathbb Z})^{{\mathbb Z}^D}, T)$, 
\begin{align}
S_T(q, a) \cong S_T(q, \hat{a})
\end{align}
for any seed $a, \hat{a} \in ({\mathbb Z} / q {\mathbb Z})^{\times}$.
\end{lem}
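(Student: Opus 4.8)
The plan is to follow the strategy of the proof of Lemma~\ref{lem:main_pri} verbatim, simply replacing the appeal to primality by the coprimality hypothesis on the seeds. Fix $a, \hat{a} \in ({\mathbb Z} / q {\mathbb Z})^{\times}$ and set $k \coloneqq \hat{a}\, a^{-1} \in ({\mathbb Z} / q {\mathbb Z})^{\times}$, which is well defined because $a$ is a unit modulo $q$. Then define $f_k : {\mathbb Z} / q {\mathbb Z} \to {\mathbb Z} / q {\mathbb Z}$ by $f_k(b) = kb$, exactly as before.

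First I would check that $f_k$ is an automorphism of the additive group ${\mathbb Z} / q {\mathbb Z}$. It is a homomorphism since $f_k(b + \hat{b}) = k(b + \hat{b}) = kb + k\hat{b} = f_k(b) + f_k(\hat{b})$. Here, unlike in Lemma~\ref{lem:main_pri}, the group ${\mathbb Z} / q {\mathbb Z}$ is not simple, so bijectivity cannot be extracted from $\Ker f_k = \{0\}$ via simplicity; instead it follows directly from $k$ being a unit, because $f_{k^{-1}}$ (with $k^{-1}$ the multiplicative inverse of $k$ modulo $q$) is a two-sided inverse of $f_k$. Hence $f_k$ is a group automorphism of ${\mathbb Z} / q {\mathbb Z}$.

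Next I would transport the dynamics. By Remark~\ref{rmk:gr} the rule $T$ is assembled from additions on ${\mathbb Z} / q {\mathbb Z}$ and from multiplications by the integer coefficients $c_j$, and $f_k$ commutes with both, since $f_k(c_j b) = k c_j b = c_j(kb) = c_j f_k(b)$. Applying $f_k$ cellwise therefore intertwines $T$, i.e. $f_k\!\left((Tu)_{\boldsymbol i}\right) = \left(T(f_k \circ u)\right)_{\boldsymbol i}$ for every configuration $u$ and every ${\boldsymbol i} \in {\mathbb Z}^D$. Moreover $f_k \circ u_{\langle a \rangle} = u_{\langle ka \rangle} = u_{\langle \hat{a} \rangle}$ because $ka = \hat{a}$ and $f_k(0) = 0$. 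An induction on $t$ then yields $f_k\!\left((T^t u_{\langle a \rangle})_{\boldsymbol i}\right) = (T^t u_{\langle \hat{a} \rangle})_{\boldsymbol i}$ for all ${\boldsymbol i} \in {\mathbb Z}^D$ and all $t \in {\mathbb Z}_{\geq 0}$. Since $f_k$ is a global bijection intertwining the two orbits, its restriction to $\Delta_T(q, a)$ is a bijection onto $\Delta_T(q, \hat{a})$, and it satisfies the condition in Definition~\ref{dfn:iso}; hence $S_T(q, a) \cong S_T(q, \hat{a})$.

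I do not expect a serious obstacle: the argument is routine once the correct map $f_k$ is identified. The one point that genuinely distinguishes this case from Lemma~\ref{lem:main_pri}, and the place where I would be careful, is the bijectivity of $f_k$. In ${\mathbb Z} / p {\mathbb Z}$ this came for free since every nonzero element is invertible, whereas in ${\mathbb Z} / q {\mathbb Z}$ multiplication by a non-unit is not injective. This is precisely why the hypothesis $a, \hat{a} \in ({\mathbb Z} / q {\mathbb Z})^{\times}$ (equivalently, that both seeds are relatively prime to $q$) is indispensable here, and it is exactly the case excluded here that must be handled separately in Lemma~\ref{lem:main_com2}.
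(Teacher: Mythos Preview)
Your proposal is correct and follows essentially the same approach as the paper: both proofs use the multiplication-by-$k$ map $f_k(b)=kb$ for a unit $k\in({\mathbb Z}/q{\mathbb Z})^{\times}$, verify it is an additive automorphism, and conclude that it carries one orbit to the other. Your write-up is in fact more explicit than the paper's (you name $k=\hat{a}\,a^{-1}$, exhibit $f_{k^{-1}}$ as the inverse, and spell out the intertwining $f_k\circ T = T\circ f_k$ and the induction on $t$), whereas the paper simply states $\Ker f_k=\{0\}$ and $\im f_k={\mathbb Z}/q{\mathbb Z}$ from coprimality and then concludes directly.
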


\begin{proof}
This proof method is similar to Lemma~\ref{lem:main_pri}.
Recall that ${\mathbb Z} / q {\mathbb Z}$ is a cyclic group. 
Since $a$ is an element of $({\mathbb Z} / q {\mathbb Z})^{\times}$, $a$ is a generator of ${\mathbb Z} / q {\mathbb Z}$.
Then, all states in ${\mathbb Z} / q {\mathbb Z}$ can appear in the spatio-temporal pattern from the single site seed $u_{\langle a \rangle}$. 

For $k \in ({\mathbb Z} / q {\mathbb Z})^{\times}$, we define a map $f_k$ on ${\mathbb Z} / q {\mathbb Z}$ by $f_k (b) = k b$ for $b \in {\mathbb Z} / q {\mathbb Z}$. 
The map $f_k$ is a homomorphism, since $f_k$ holds $f_k(b+\hat{b}) = f_k(b) + f_k(\hat{b})$ for $b, \hat{b} \in {\mathbb Z} / q {\mathbb Z}$.
Because $k$ and $q$ are relatively prime, we have $\Ker f_k = \{0\}$ and $\im f_k = {\mathbb Z} / q {\mathbb Z}$.
Hence, $f_k$ is a group automorphism of ${\mathbb Z} / q {\mathbb Z}$. 

Thus, for $\hat{a} \in ({\mathbb Z} / q {\mathbb Z})^{\times}$, we take $a \in ({\mathbb Z} / q {\mathbb Z})^{\times}$ such that $(T^t u_{\langle a \rangle})_{\boldsymbol i} = f_k (T^t u_{\langle \hat{a} \rangle})_{\boldsymbol i}$ for any site ${\boldsymbol i} \in {\mathbb Z}^D$ and any time step $t \in {\mathbb Z}_{\geq 0}$.
\end{proof}

We will see the result in Lemma~\ref{lem:main_com1} specifically, by giving the following two examples when the number of states of linear CAs are $4$ and $6$.

\begin{exm}[When $n=4$ and $a \in ({\mathbb Z} / 4 {\mathbb Z})^{\times}$]
\label{exm:lem2_4}

Table~\ref{tab:f_k4} shows the generation orders of ${\mathbb Z} / 4 {\mathbb Z}$ by generator $a \in ({\mathbb Z} / 4 {\mathbb Z})^{\times}$ and the values of $f_a$ for $a \in ({\mathbb Z} / 4 {\mathbb Z})^{\times}$. 
Since $({\mathbb Z} / 4 {\mathbb Z})^{\times} = \{1, 3\}$, the elements are the generators of ${\mathbb Z} / 4 {\mathbb Z}$. 
The order of $b$ at the first row in Table~\ref{tab:f_k4}~$(b)$ is given by the generation order from $1$ as a generator.
We can see that the orders of values appearing in the two tables $(a)$ and $(b)$ correspond. 

\begin{table}[htb]
\caption{Generation orders of the group ${\mathbb Z} / 4 {\mathbb Z}$ by generator $a \in ({\mathbb Z} / 4 {\mathbb Z})^{\times}$ and values of $f_a$ on ${\mathbb Z} / 4 {\mathbb Z}$ for $a \in ({\mathbb Z} / 4 {\mathbb Z})^{\times}$}
\label{tab:f_k4}
\quad \\
\begin{minipage}[c]{0.45\hsize}
\centering
$(a)$ Generation orders of ${\mathbb Z} / 4 {\mathbb Z}$ by $a$\\
\vspace{2mm}
\begin{tabular}{c l}
\hline
$a$ & generation order\\
\hline
$1$ & $1$ $\mapsto$ $2$ $\mapsto$ $3$ $\mapsto$ $0$\\
$3$ & $3$ $\mapsto$ $2$ $\mapsto$ $1$ $\mapsto$ $0$\\
\hline
\end{tabular}
\end{minipage}
\begin{minipage}[c]{0.05\hsize}
\quad
\end{minipage}
\begin{minipage}[c]{0.45\hsize}
\centering
$(b)$ Values of $f_a$ for $b \in {\mathbb Z} / 4 {\mathbb Z}$\\
\vspace{2mm}
\begin{tabular}{c| c c c c}
\hline
$b$ & $1$ & $2$ & $3$ & $0$\\
$f_1(b)$ & $1$ & $2$ & $3$ & $0$\\
$f_3(b)$ & $3$ & $2$ & $1$ & $0$\\
\hline
\end{tabular}
\end{minipage}
\end{table}

Because the generators of ${\mathbb Z} / 4 {\mathbb Z}$ are $1$ and $3$, Table~\ref{tab:mod4} shows the additive tables arranged in generation order from the two generators.
Because $f_k$ is a group automorphism, just as when the number of states is prime, the identity element appears at the same sites and positive states are switched.
Though the positive value $2$ appears in the same site, the other elements $1$ and $3$ are switched in these two tables.
Thus, in the spatio-temporal patterns, $0$ corresponds to $0$, and positive-valued states are swapped each other.

\begin{table}[htb]
\caption{Additive tables of ${\mathbb Z} / 4 {\mathbb Z}$ by generators}
\label{tab:mod4}
\quad \\
\begin{minipage}[c]{0.45\hsize}
\centering
$(a)$ Table arranged in the order of generation from generator $1$\\
\vspace{2mm}
\begin{tabular}{c| c c c c}
$+$ & $1$ & $2$ & $3$ & $0$\\
\hline
$1$ & $2$ & $3$ & $0$ & $1$\\
$2$ & $3$ & $0$ & $1$ & $2$\\
$3$ & $0$ & $1$ & $2$ & $3$\\
$0$ & $1$ & $2$ & $3$ & $0$\\ 
\end{tabular}
\end{minipage}
\begin{minipage}[c]{0.05\hsize}
\quad
\end{minipage}
\begin{minipage}[c]{0.45\hsize}
\centering
$(b)$ Table arranged in the order of generation from generator $3$\\
\vspace{2mm}
\begin{tabular}{c| c c c c}
$+$ & $3$ & $2$ & $1$ & $0$\\
\hline
$3$ & $2$ & $1$ & $0$ & $3$\\
$2$ & $1$ & $0$ & $3$ & $2$\\
$1$ & $0$ & $3$ & $2$ & $1$\\
$0$ & $3$ & $2$ & $1$ & $0$\\ 
\end{tabular}
\end{minipage}
\end{table}
\end{exm}

\begin{exm}[When $n=6$ and $a \in ({\mathbb Z} / 6 {\mathbb Z})^{\times}$]
\label{exm:lem2_6}

Since $({\mathbb Z} / 6 {\mathbb Z})^{\times} = \{1, 5\}$, Table~\ref{tab:f_k6} shows the generation orders of ${\mathbb Z} / 6 {\mathbb Z}$ by generator $a \in \{1, 5\}$ and the values of $f_k$ for $k \in \{1, 5\}$. 
The order of $b$ at the first row in Table~\ref{tab:f_k6}~$(b)$ is given by the generation order from $1$.
For a composite number $6$ and $a \in ({\mathbb Z} / 6 {\mathbb Z})^{\times}$, each element is transferred to an element on ${\mathbb Z} / 6 {\mathbb Z}$ by $f_a$.
The generation orders from the two generators $1$ and $5$ correspond to the orders of the values given by $f_a$.

\begin{table}[htb]
\caption{Generation orders of the group ${\mathbb Z} / 6 {\mathbb Z}$ by generator $a \in ({\mathbb Z} / 6 {\mathbb Z})^{\times}$ and values of $f_a$ on ${\mathbb Z} / 6 {\mathbb Z}$ for $a \in ({\mathbb Z} / 6 {\mathbb Z})^{\times}$}
\label{tab:f_k6}
\quad \\
\begin{minipage}[c]{0.55\hsize}
\centering
$(a)$ Generation orders of ${\mathbb Z} / 6 {\mathbb Z}$ by $a$\\
\vspace{2mm}
\begin{tabular}{c l}
\hline
$a$ & generation order\\
\hline
$1$ & $1$ $\mapsto$ $2$ $\mapsto$ $3$ $\mapsto$ $4$ $\mapsto$ $5$ $\mapsto$ $0$\\
$5$ & $5$ $\mapsto$ $4$ $\mapsto$ $3$ $\mapsto$ $2$ $\mapsto$ $1$ $\mapsto$ $0$\\
\hline
\end{tabular}
\end{minipage}
\begin{minipage}[c]{0.05\hsize}
\quad
\end{minipage}
\begin{minipage}[c]{0.35\hsize}
\centering
$(b)$ Values of $f_a$ for $b \in {\mathbb Z} / 6 {\mathbb Z}$\\
\vspace{2mm}
\begin{tabular}{c| c c c c c c}
\hline
$b$ & $1$ & $2$ & $3$ & $4$ & $5$ & $0$\\
$f_1(b)$ & $1$ & $2$ & $3$ & $4$ & $5$ & $0$\\
$f_5(b)$ & $5$ & $4$ & $3$ & $2$ & $1$ & $0$\\
\hline
\end{tabular}
\end{minipage}
\end{table}

For the generators $1$ and $5$, Table~\ref{tab:mod6} shows the additive tables of ${\mathbb Z} / 6 {\mathbb Z}$ where elements are ordered by the generation orders.
In these tables, the identity element appears in the same sites and positive elements are switched in the two tables. 
Because by a group automorphism $f_a$, states $1$ and $5$ are switched and states $2$ and $4$ are switched, for the spatio-temporal patterns of a $6$-state linear CA with the seed $a \in ({\mathbb Z} / 6 {\mathbb Z})^{\times}$, when the seed is changed, the identity element $0$ appears in the same sites, but the positive-valued states are switched.

\begin{table}[htb]
\caption{Additive tables of ${\mathbb Z} / 6 {\mathbb Z}$ by generators}
\label{tab:mod6}
\quad \\
\begin{minipage}[c]{0.45\hsize}
\centering
$(a)$ Table arranged in the order of generation from generator $1$\\
\vspace{2mm}
\begin{tabular}{c| c c c c c c}
$+$ & $1$ & $2$ & $3$ & $4$ & $5$ & $0$\\
\hline
$1$ & $2$ & $3$ & $4$ & $5$ & $0$ & $1$\\
$2$ & $3$ & $4$ & $5$ & $0$ & $1$ & $2$\\
$3$ & $4$ & $5$ & $0$ & $1$ & $2$ & $3$\\
$4$ & $5$ & $0$ & $1$ & $2$ & $3$ & $4$\\
$5$ & $0$ & $1$ & $2$ & $3$ & $4$ & $5$\\
$0$ & $1$ & $2$ & $3$ & $4$ & $5$ & $0$\\ 
\end{tabular}
\end{minipage}
\begin{minipage}[c]{0.05\hsize}
\quad
\end{minipage}
\begin{minipage}[c]{0.45\hsize}
\centering
$(b)$ Table arranged in the order of generation from generator $5$\\
\vspace{2mm}
\begin{tabular}{c| c c c c c c}
$+$ & $5$ & $4$ & $3$ & $2$ & $1$ & $0$\\
\hline
$5$ & $4$ & $3$ & $2$ & $1$ & $0$ & $5$\\
$4$ & $3$ & $2$ & $1$ & $0$ & $5$ & $4$\\
$3$ & $2$ & $1$ & $0$ & $5$ & $4$ & $3$\\
$2$ & $1$ & $0$ & $5$ & $4$ & $3$ & $2$\\
$1$ & $0$ & $5$ & $4$ & $3$ & $2$ & $1$\\
$0$ & $5$ & $4$ & $3$ & $2$ & $1$ & $0$\\ 
\end{tabular}
\end{minipage}
\end{table}
\end{exm}


\subsection{When the number of states is a composite number, and the seed is not relatively prime to the number of states}
\label{subsec:com2}

In this subsection, we consider the case where the number of states is composite and was not considered in the previous subsection, Section~\ref{subsec:com1}.
In other words, we consider the case where the number of states $n$ of a linear CA is composite and the seed $a$ is not relatively prime to $n$.

Here, we see Figure~\ref{fig:sp4} again, that shows the spatio-temporal patterns of a $4$-state linear CA.
In Lemma~\ref{lem:main_com1}, we have already shown that the spatio-temporal patterns of seeded $1$ and $3$ are isomorphic. 
In this subsection, we give results for the spatio-temporal pattern in the remaining case of seed $2$. 
In this case, we show that the spatio-temporal pattern is isomorphic to the pattern of the elementary CA Rule $90$ with the number of states $2$, although it is different from the spatio-temporal patterns from other seeds.
Furthermore, we see the spatio-temporal patterns of the $6$-state linear CA in Figure~\ref{fig:sp6} again.
In Lemma~\ref{lem:main_com1}, we have already shown that the spatio-temporal patterns for seed $1$ and $5$ are isomorphic, so here we consider the remaining patterns for seed $2$, $3$ and $4$.
Observing these patterns, we can notice that the patterns of the positive-valued states for seed $2$ and $4$ are the same, but in the case of seed $3$, it is different from them.
In the following lemma, we show that the spatio-temporal patterns for seed $2$ and $4$ are isomorphic to a spatio-temporal pattern of a $3$-state linear CA, while for the seed $3$ the pattern is isomorphic to the pattern of a $2$-state linear CA. 
More generally, we show that for a composite number $n$ and the seed $a$ that is not relatively prime to $n$, the spatio-temporal pattern for seed $a$ with $n$ states is isomorphic to the spatio-temporal pattern of a linear CA for seed $a/\gcd(n, a)$ with $n/\gcd(n, a)$ states.

\begin{lem}
\label{lem:main_com2}
Let $q$ be a composite number. 
For $a \in ({\mathbb Z} / q {\mathbb Z}) \backslash \{ ({\mathbb Z} / q {\mathbb Z})^{\times} \cup \{0\}\}$, let $r = q / \gcd(q, a)$. 
We consider two CAs; a $D$-dimensional $q$-state linear CA $(({\mathbb Z} / q {\mathbb Z})^{{\mathbb Z}^D}, T)$, and a $D$-dimensional $r$-state linear CA $(({\mathbb Z} / r {\mathbb Z})^{{\mathbb Z}^D}, T)$.
Note that the CAs have the same transition rule denoted by $T$. 
Then, 
\begin{align}
S_T(q, a) \cong S_T(r, a/ \gcd(q, a)).
\end{align}
\end{lem}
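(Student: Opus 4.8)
The plan is to use the $\mathbb{Z}$-linearity of $T$ to show that, starting from $u_{\langle a\rangle}$, the only states that can ever appear are the multiples of $d := \gcd(q,a)$, and then to identify this set of states with $\mathbb{Z}/r\mathbb{Z}$ compatibly with $T$. Before anything else I would note that, because $a\notin(\mathbb{Z}/q\mathbb{Z})^{\times}\cup\{0\}$, we have $1<d<q$, hence $r=q/d\geq 2$ (so the $r$-state CA in the statement exists) and $a/d\in\{1,\dots,r-1\}$ (so $a/\gcd(q,a)$ is a genuine nonzero seed for that CA). Recall also that $d\,\mathbb{Z}/q\mathbb{Z}=\{0,d,2d,\dots,(r-1)d\}$ is the unique subgroup of $\mathbb{Z}/q\mathbb{Z}$ of order $r$ (from Section~\ref{subsec:def_group}), and that $\phi\colon d\,\mathbb{Z}/q\mathbb{Z}\to\mathbb{Z}/r\mathbb{Z}$, $\phi(x)=x/d$, is a group isomorphism.

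The key step is a single induction on the time step $t$ establishing two statements at once: (i) every entry of $T^{t}u_{\langle a\rangle}$ lies in $d\,\mathbb{Z}/q\mathbb{Z}$, and (ii) $\phi\bigl((T^{t}u_{\langle a\rangle})_{\boldsymbol i}\bigr)=(T^{t}u_{\langle a/d\rangle})_{\boldsymbol i}$ for every site ${\boldsymbol i}\in\mathbb{Z}^{D}$, where the right-hand side is computed in the $r$-state CA. The base case $t=0$ is immediate since $d\mid a$ and $\phi(a)=a/d$. For the inductive step, write $(Tu)_{\boldsymbol i}=\sum_{j=1}^{m}c_{j}u_{{\boldsymbol i}+{\boldsymbol v}_{j}}$ with $c_{j}\in\mathbb{Z}$ as in Definition~\ref{dfn:ca}; if each $u_{{\boldsymbol i}+{\boldsymbol v}_{j}}$ is a multiple of $d$ modulo $q$, then the integer combination $\sum_{j}c_{j}u_{{\boldsymbol i}+{\boldsymbol v}_{j}}$ is again a multiple of $d$ modulo $q$, which gives (i); and since $\phi$ is a homomorphism on $d\,\mathbb{Z}/q\mathbb{Z}$ we get $\phi\bigl(\sum_{j}c_{j}u_{{\boldsymbol i}+{\boldsymbol v}_{j}}\bigr)=\sum_{j}c_{j}\phi(u_{{\boldsymbol i}+{\boldsymbol v}_{j}})$, which together with the inductive hypothesis (ii) and the fact that both CAs use the same rule $T$ is exactly $(T^{t+1}u_{\langle a/d\rangle})_{\boldsymbol i}$. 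Informally, $\phi$ intertwines the two transition rules once one restricts to configurations whose entries all lie in $d\,\mathbb{Z}/q\mathbb{Z}$.

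It then remains to package this as an isomorphism in the sense of Definition~\ref{dfn:iso}. Statement (i) gives $\Delta_{T}(q,a)\subseteq d\,\mathbb{Z}/q\mathbb{Z}$, so $\phi$ restricts to an injection $\Delta_{T}(q,a)\to\mathbb{Z}/r\mathbb{Z}$; statement (ii) shows its image is precisely $\Delta_{T}(r,a/d)$, so $\phi|_{\Delta_{T}(q,a)}\colon\Delta_{T}(q,a)\to\Delta_{T}(r,a/\gcd(q,a))$ is a bijection, and (ii) is exactly the intertwining relation $\phi|_{\Delta_{T}(q,a)}(T^{t}u_{\langle a\rangle})_{\boldsymbol i}=(T^{t}u_{\langle a/\gcd(q,a)\rangle})_{\boldsymbol i}$ for all ${\boldsymbol i}$ and $t$. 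Hence $S_{T}(q,a)\cong S_{T}(r,a/\gcd(q,a))$. I do not expect a real obstacle here; the only points that need care are keeping the integer coefficients $c_{j}$ (not merely their residues) visible throughout the induction so that divisibility by $d$ is manifestly preserved, and the bookkeeping check that $a/d$ really is a nonzero element of $\mathbb{Z}/r\mathbb{Z}$ so that the target spatio-temporal pattern is well defined.
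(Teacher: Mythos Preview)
Your proposal is correct and follows essentially the same approach as the paper: both identify the subgroup $d\,\mathbb{Z}/q\mathbb{Z}$ (with $d=\gcd(q,a)$) and use the division-by-$d$ isomorphism $\phi$ onto $\mathbb{Z}/r\mathbb{Z}$. The paper's proof is terser---it simply names this isomorphism and leaves the compatibility with $T$ implicit via Remark~\ref{rmk:gr}---whereas you explicitly carry out the induction on $t$ verifying both that the orbit stays in $d\,\mathbb{Z}/q\mathbb{Z}$ and that $\phi$ intertwines the two transition rules; this extra detail is a welcome clarification rather than a different route.
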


\begin{proof}
For the group ${\mathbb Z} / q {\mathbb Z}$, it is known that for each divisor $r$ of $q$, there exists one subgroup whose order is $r$.
Let $a \in ({\mathbb Z} / q {\mathbb Z}) \backslash \{ ({\mathbb Z} / q {\mathbb Z})^{\times} \cup \{0\}\}$, and let $r = q / \gcd(q, a)$. 
Then, $1< r < q$, and there exists a proper subgroup of ${\mathbb Z} / q {\mathbb Z}$, $\gcd (q, a) {\mathbb Z} / q {\mathbb Z}$ with the order $r$.
Although $a$ is not a generator of ${\mathbb Z} / q {\mathbb Z}$, $a$ is a generator of the proper subgroup $\gcd (q, a) {\mathbb Z} / q {\mathbb Z}$.
For $r = q / \gcd(q, k)$ with $k \in ({\mathbb Z} / q {\mathbb Z}) \backslash \{ ({\mathbb Z} / q {\mathbb Z})^{\times} \cup \{0\}\}$, given the map $f_k : \gcd (q, k) {\mathbb Z} / q {\mathbb Z} \to {\mathbb Z} / r {\mathbb Z}$ with $f_k(b) = b/ \gcd(q, k)$ for $b \in \gcd (q, k) {\mathbb Z} / q {\mathbb Z}$, it is an isomorphism.
\end{proof}

For Lemma~\ref{lem:main_com2}, we will give the following two examples when the number of states is $4$ and $6$.

\begin{exm}[When $n=4$ and $a \in ({\mathbb Z} / 4 {\mathbb Z}) \backslash \{({\mathbb Z} / 4 {\mathbb Z})^{\times} \cup \{0\} \}$]
\label{exm:lem3_4}

First, $({\mathbb Z} / 4 {\mathbb Z}) \backslash \{({\mathbb Z} / 4 {\mathbb Z})^{\times} \cup \{0\} \} = \{2\}$.
When $n=4$ and $a=2$, $\gcd(n, a) = 2$, and then we define a map $f_2 : 2 {\mathbb Z} / 4 {\mathbb Z} \to {\mathbb Z} / 2 {\mathbb Z}$ by $b/2$ for $b \in 2 {\mathbb Z} / 4 {\mathbb Z}$.
Table~\ref{tab:f_k4_com2}~$(a)$ shows the generation order of a subgroup of ${\mathbb Z} / 4 {\mathbb Z}$, $2 {\mathbb Z} / 4 {\mathbb Z}$, and Table~\ref{tab:f_k4_com2}~$(b)$ shows the values of $f_2$. 
It can be seen that the states $2$ and $0$ in $2 {\mathbb Z} / 4 {\mathbb Z}$ correspond to the states $1$ and $0$ in ${\mathbb Z} / 2 {\mathbb Z}$, respectively.

\begin{table}[htb]
\caption{Generation order of the proper subgroup of ${\mathbb Z} / 4 {\mathbb Z}$ by $a \in ({\mathbb Z} / 4 {\mathbb Z}) \backslash \{({\mathbb Z} / 4 {\mathbb Z})^{\times} \cup \{0\} \}$ and values of $f_a$ from $2 {\mathbb Z} / 4 {\mathbb Z}$ to ${\mathbb Z} / 2 {\mathbb Z}$ for $a \in ({\mathbb Z} / 4 {\mathbb Z}) \backslash \{({\mathbb Z} / 4 {\mathbb Z})^{\times} \cup \{0\} \}$}
\label{tab:f_k4_com2}
\quad \\
\begin{minipage}[c]{0.45\hsize}
\centering
$(a)$ Generation order of $2 {\mathbb Z} / 4 {\mathbb Z}$ by $a$\\
\vspace{2mm}
\begin{tabular}{c l}
\hline
$a$ & generation order\\
\hline
$2$ & $2$ $\mapsto$ $0$ \\
\hline
\end{tabular}
\end{minipage}
\begin{minipage}[c]{0.05\hsize}
\quad
\end{minipage}
\begin{minipage}[c]{0.45\hsize}
\centering
$(b)$ Values of $f_a$ for $b \in 2 {\mathbb Z} / 4 {\mathbb Z}$\\
\vspace{2mm}
\begin{tabular}{c| c c}
\hline
$b$ & $2$ & $0$\\
$f_2(b)$ & $1$ & $0$\\
\hline
\end{tabular}
\end{minipage}
\end{table}

Table~\ref{tab:mod4_2} shows the additive table of $2 {\mathbb Z} / 4 {\mathbb Z}$, and Table~\ref{tab:mod2_1} shows the additive table of ${\mathbb Z} / 2 {\mathbb Z}$ where elements are ordered by the generation orders.
Comparing them, we can see that the state $2$ in Table~\ref{tab:mod4_2} corresponds to $1$ in Table~\ref{tab:mod2_1}, and $0$ remains $0$.
From these additive tables, we see that the spatio-temporal patterns of the two linear CAs are isomorphic. 
Because the map $f_2$ is an isomorphism, the states $2$ and $0$ in the spatio-temporal pattern of a $4$-state linear CA for the seed $2$ correspond to the states $1$ and $0$ in the spatio-temporal pattern of a $2$-state linear CA for the seed $1$, respectively.

\begin{table}[htbp]
\begin{minipage}[c]{0.45\hsize}
\caption{Additive table of $2 {\mathbb Z} / 4 {\mathbb Z}$ (arranged in the order of generation from the element $2$)}
\label{tab:mod4_2}
\quad \\
\centering
\begin{tabular}{c| c c}
$+$ & $2$ & $0$ \\
\hline
$2$ & $0$ & $2$\\
$0$ & $2$ & $0$\\
\end{tabular}
\end{minipage}
\begin{minipage}[c]{0.05\hsize}
\quad
\end{minipage}
\begin{minipage}[c]{0.45\hsize}
\caption{Additive table of ${\mathbb Z} / 2 {\mathbb Z}$ (arranged in the order of generation from the element $1$)}
\label{tab:mod2_1}
\quad \\
\centering
\begin{tabular}{c| c c}
$+$ & $1$ & $0$ \\
\hline
$1$ & $0$ & $1$\\
$0$ & $1$ & $0$\\
\end{tabular}
\end{minipage}
\end{table}
\end{exm}

\begin{exm}[When $n=6$ and $a \in ({\mathbb Z} / 6 {\mathbb Z}) \backslash \{({\mathbb Z} / 6 {\mathbb Z})^{\times} \cup \{0\} \}$]
\label{exm:lem3_6}

We have $({\mathbb Z} / 6 {\mathbb Z}) \backslash \{({\mathbb Z} / 6 {\mathbb Z})^{\times} \cup \{0\} \} = \{2, 3, 4\}$.
We define a map $f_a : a {\mathbb Z} / 6 {\mathbb Z} \to {\mathbb Z} / (6/a) {\mathbb Z}$ by $b/a$ for $b \in a {\mathbb Z} / 6 {\mathbb Z}$.
Because $\gcd(6, 2) = \gcd(6, 4) = 2$ and $\gcd(6, 3)= 3$, we separate the cases where the seed is $2$ and $4$ from the case where the seed is $3$.

\begin{table}[htb]
\caption{Generation orders of a subgroup $2 {\mathbb Z} / 6 {\mathbb Z}$, and values of $f_a$ for $a \in \{2, 4\}$ from $2 {\mathbb Z} / 6 {\mathbb Z}$ to ${\mathbb Z} / 3 {\mathbb Z}$}
\label{tab:f_k6_com2}
\quad \\
\begin{minipage}[c]{0.35\hsize}
\centering
$(a)$ Generation orders of $2 {\mathbb Z} / 6 {\mathbb Z}$\\
\vspace{2mm}
\begin{tabular}{c l}
\hline
$a$ & generation order\\
\hline
$2$ & $2$ $\mapsto$ $4$ $\mapsto$ $0$\\
$4$ & $4$ $\mapsto$ $2$ $\mapsto$ $0$\\
\hline
\end{tabular}
\end{minipage}
\begin{minipage}[c]{0.05\hsize}
\quad
\end{minipage}
\begin{minipage}[c]{0.55\hsize}
\centering
$(b)$ Values of $f_a$ for $b \in 2 {\mathbb Z} / 6 {\mathbb Z}$\\
\vspace{2mm}
\begin{minipage}[c]{0.45\hsize}
\vspace{2mm}
\begin{tabular}{c| c c c}
\hline
$b$ & $2$ & $4$ & $0$\\
$f_2(b)$ & $1$ & $2$ & $0$ \\
\hline
\end{tabular}
\end{minipage}
\begin{minipage}[c]{0.05\hsize}
\quad 
\end{minipage}
\begin{minipage}[c]{0.45\hsize}
\vspace{2mm}
\begin{tabular}{c| c c c}
\hline
$b$ & $4$ & $2$ & $0$\\
$f_4(b)$ & $2$ & $1$ & $0$ \\
\hline
\end{tabular}
\end{minipage}
\end{minipage}
\end{table}

First, when the seed $a$ is $2$ or $4$, we consider a subgroup of ${\mathbb Z} / 6 {\mathbb Z}$, $2 {\mathbb Z} / 6 {\mathbb Z}$, because $\gcd(6, a) = 2$. 
Table~\ref{tab:f_k6_com2}~$(a)$ shows the generation orders of $2 {\mathbb Z} / 6 {\mathbb Z}$ from generators, and Table~\ref{tab:f_k6_com2}~$(b)$ shows the values of $f_a$ for $a \in \{2, 4\}$. 
The isomorphism $f_a$ allows us to reduce the number of states from $6$ to $3$.
Tables~\ref{tab:mod6_2}~$(a)$ and $(c)$ show additive tables of $2 {\mathbb Z} / 6 {\mathbb Z}$ arranged in generation order from generators, $2$ and $4$.
Table~\ref{tab:mod6_2}~$(a)$ starting from the seed $2$ is transferred by $f_2$ to the $3$-state additive table in Table~\ref{tab:mod3}~$(a)$, and Table~\ref{tab:mod6_2}~$(c)$ starting from the seed $4$ is transferred by $f_2$ to the $3$-state additive table in Table~\ref{tab:mod3}~$(b)$.
Observing these tables, it can be seen that the states $2$ and $4$ in $2 {\mathbb Z} / 6 {\mathbb Z}$ are transferred to the states $1$ and $2$ in ${\mathbb Z} / 3 {\mathbb Z}$, respectively.

\begin{table}[htb]
\caption{Additive tables of subgroups of ${\mathbb Z} / 6 {\mathbb Z}$}
\label{tab:mod6_2}
\quad \\
\begin{minipage}[c]{0.3\hsize}
\centering
$(a)$ Additive table of $2 {\mathbb Z} / 6 {\mathbb Z}$ (arranged in the order of generation from $2$)\\
\vspace{2mm}
\begin{tabular}{c| c c c}
$+$ & $2$ & $4$ & $0$ \\
\hline
$2$ & $4$ & $0$ & $2$\\
$4$ & $0$ & $2$ & $4$\\
$0$ & $2$ & $4$ & $0$\\
\end{tabular}
\end{minipage}
\begin{minipage}[c]{0.03\hsize}
\quad
\end{minipage}
\begin{minipage}[c]{0.3\hsize}
\centering
$(b)$ Additive table of $3 {\mathbb Z} / 6 {\mathbb Z}$ (arranged in the order of generation from $3$)\\
\vspace{2mm}
\begin{tabular}{c| c c}
$+$ & $3$ & $0$ \\
\hline
$3$ & $0$ & $3$ \\
$0$ & $3$ & $0$ \\
\end{tabular}
\end{minipage}
\begin{minipage}[c]{0.03\hsize}
\quad
\end{minipage}
\begin{minipage}[c]{0.3\hsize}
\centering
$(c)$Additive table of $2 {\mathbb Z} / 6 {\mathbb Z}$ (arranged in the order of generation from $4$)\\
\vspace{2mm}
\begin{tabular}{c| c c c}
$+$ & $4$ & $2$ & $0$ \\
\hline
$4$ & $2$ & $0$ & $4$ \\
$2$ & $0$ & $4$ & $2$ \\
$0$ & $4$ & $2$ & $0$ \\
\end{tabular}
\end{minipage}
\end{table}

\begin{table}[htb]
\caption{Generation order of a subgroup $3 {\mathbb Z} / 6 {\mathbb Z}$, and values of $f_a$ for $a = 3$ from $3 {\mathbb Z} / 6 {\mathbb Z}$ to ${\mathbb Z} / 2 {\mathbb Z}$}
\label{tab:f_k6_com3}
\quad \\
\begin{minipage}[c]{0.45\hsize}
\centering
$(a)$ Generation order of $3 {\mathbb Z} / 6 {\mathbb Z}$\\
\vspace{2mm}
\begin{tabular}{c l}
\hline
$a$ & generation order\\
\hline
$3$ & $3$ $\mapsto$ $0$\\
\hline
\end{tabular}
\end{minipage}
\begin{minipage}[c]{0.05\hsize}
\quad
\end{minipage}
\begin{minipage}[c]{0.45\hsize}
\centering
$(b)$ Values of $f_a$ for $b \in 3 {\mathbb Z} / 6 {\mathbb Z}$\\
\vspace{2mm}
\begin{tabular}{c| c c}
\hline
$b$ & $3$ & $0$ \\
$f_3(b)$ & $1$ & $0$ \\
\hline
\end{tabular}
\end{minipage}
\end{table}

Next, we study the spatio-temporal pattern of a $6$-state linear CA for the seed $3$.
Since $\gcd(6, 3) = 3$, we consider a subgroup of ${\mathbb Z} / 6 {\mathbb Z}$, $3 {\mathbb Z} / 6 {\mathbb Z}$.
Table~\ref{tab:f_k6_com3}~$(a)$ shows the generation order of a subgroup $3 {\mathbb Z} / 6 {\mathbb Z}$, and Table~\ref{tab:f_k6_com3}~$(b)$ shows the values of $f_a$ from $3 {\mathbb Z} / 6 {\mathbb Z}$ to ${\mathbb Z} / 2 {\mathbb Z}$ for $a = 3$. 
Table~\ref{tab:mod6_2}~$(b)$ shows the additive table of $3 {\mathbb Z} / 6 {\mathbb Z}$ arranged in order of generation from the generator $3$.
By $f_3$, this table can be transferred to the additive table ${\mathbb Z} / 2 {\mathbb Z}$ in Table~\ref{tab:mod2_1}.
Thus, we see that the spatio-temporal pattern of a $6$-state linear CA for the seed $3$ is isomorphic to the spatio-temporal pattern of a $2$-state linear CA for the seed $1$ with the same transition rule.
\end{exm}


\subsection{Summary of the three previous cases (when the number of states is any integer greater than or equal to $2$)}
\label{subsec:all}

Summarizing the previous three sections, Sections~\ref{subsec:pri}, \ref{subsec:com1}, and \ref{subsec:com2}, we give the following theorem.

\begin{thm}
\label{thm:main}
Let $n$ be an integer greater than or equal to $2$.
For $a \in ({\mathbb Z} / n {\mathbb Z}) \backslash \{0\}$, let $r = n / \gcd(n, a)$. 
Let $(({\mathbb Z} / n {\mathbb Z})^{{\mathbb Z}^D}, T)$ be a $D$-dimensional $n$-state linear CA, and let $(({\mathbb Z} / r {\mathbb Z})^{{\mathbb Z}^D}, T)$ be a $D$-dimensional $r$-state linear CA whose transition rule is the same as $(({\mathbb Z} / n {\mathbb Z})^{{\mathbb Z}^D}, T)$. 
The spatio-temporal patterns of the two CAs hold the following relationship:
\begin{align}
S_T(n, a) \cong S_T(r, 1). 
\end{align}
\end{thm}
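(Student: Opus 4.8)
The plan is to derive Theorem~\ref{thm:main} purely by a case analysis on $d := \gcd(n,a)$, feeding each case into one or two of the three lemmas and gluing the resulting relations with transitivity of $\cong$. First I would record that $\cong$ is transitive: if $f : \Delta_T(n,a) \to \Delta_T(\hat n, \hat a)$ and $g : \Delta_T(\hat n, \hat a) \to \Delta_T(\tilde n, \tilde a)$ are isomorphisms that intertwine the respective spatio-temporal patterns, then $g \circ f$ intertwines them too, so $S_T(n,a) \cong S_T(\hat n, \hat a)$ and $S_T(\hat n, \hat a) \cong S_T(\tilde n, \tilde a)$ together give $S_T(n,a) \cong S_T(\tilde n, \tilde a)$. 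I would also lay out the elementary divisibility bookkeeping used throughout: since $a$ is a nonzero element of $\mathbb{Z}/n\mathbb{Z}$ we may take $1 \le a \le n-1$, so $1 \le d < n$ and hence $r = n/d$ satisfies $2 \le r \le n$; moreover $\bar a := a/d$ satisfies $1 \le \bar a \le r-1$, so $\bar a \not\equiv 0 \pmod r$, and $\gcd(r,\bar a) = 1$.

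\textbf{Case 1: $d = 1$.} Here $a \in (\mathbb{Z}/n\mathbb{Z})^{\times}$ and $r = n$. If $n$ is prime, Lemma~\ref{lem:main_pri} applied with seeds $a$ and $1$ (both nonzero) gives $S_T(n,a) \cong S_T(n,1)$. If $n$ is composite, then $a$ and $1$ both lie in $(\mathbb{Z}/n\mathbb{Z})^{\times}$, so Lemma~\ref{lem:main_com1} gives $S_T(n,a) \cong S_T(n,1)$. In either subcase $S_T(n,a) \cong S_T(r,1)$, as desired.

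\textbf{Case 2: $d > 1$.} Then $d$ is a divisor of $n$ with $1 < d < n$, so $n$ is composite, and since $\gcd(n,a) = d > 1$ and $a \ne 0$ we have $a \in (\mathbb{Z}/n\mathbb{Z}) \backslash \{ (\mathbb{Z}/n\mathbb{Z})^{\times} \cup \{0\} \}$. Hence Lemma~\ref{lem:main_com2} applies and yields $S_T(n,a) \cong S_T(r, \bar a)$ with $r = n/d \ge 2$ and $\bar a = a/d$. It remains to replace the seed $\bar a$ by $1$. Since $\gcd(r,\bar a) = 1$ and $\bar a$ is a nonzero element of $\mathbb{Z}/r\mathbb{Z}$, the pair $(r,\bar a)$ is exactly in the situation of Case 1 (with $n$ replaced by $r$): if $r$ is prime, Lemma~\ref{lem:main_pri} gives $S_T(r,\bar a) \cong S_T(r,1)$; if $r$ is composite, then $\bar a \in (\mathbb{Z}/r\mathbb{Z})^{\times}$ and Lemma~\ref{lem:main_com1} gives the same. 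Composing this with $S_T(n,a) \cong S_T(r,\bar a)$ via transitivity yields $S_T(n,a) \cong S_T(r,1)$, completing the case and the proof.

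There is no genuinely hard step here — the argument is assembly from the three lemmas — so the only thing that needs care is verifying that every lemma invocation is legitimate: that $r \ge 2$ so that an "$r$-state CA" is meaningful, that in Case 2 the reduced seed $\bar a$ really does descend to a nonzero element of $\mathbb{Z}/r\mathbb{Z}$ coprime to $r$ (so the recursion genuinely lands back in Case 1), and that $n$ is forced to be composite whenever $d > 1$ (so Lemmas~\ref{lem:main_com1} and \ref{lem:main_com2} apply). With these small checks in place, transitivity of $\cong$ glues the at most two lemma applications together and the theorem follows.
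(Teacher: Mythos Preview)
Your proof is correct and follows essentially the same route as the paper's own argument: split according to whether $\gcd(n,a)=1$, apply Lemma~\ref{lem:main_pri} or Lemma~\ref{lem:main_com1} directly in the coprime case, and in the non-coprime case first apply Lemma~\ref{lem:main_com2} to pass to $(r,\bar a)$ with $\gcd(r,\bar a)=1$, then finish with Lemma~\ref{lem:main_pri} or Lemma~\ref{lem:main_com1}. The only difference is that you spell out the bookkeeping (transitivity of $\cong$, $r\ge 2$, $\bar a$ nonzero and coprime to $r$, $n$ composite when $d>1$) more explicitly than the paper does.
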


\begin{proof}
By Lemmas~\ref{lem:main_pri} and \ref{lem:main_com1}, for $n \in {\mathbb Z}_{\geq 2}$ and $a \in ({\mathbb Z} / n {\mathbb Z})^{\times}$,  $S_T(n, a) \cong S_T(n, 1)$, because $1 \in ({\mathbb Z} / n {\mathbb Z})^{\times}$ for any $n$.
By Lemma~\ref{lem:main_com2}, $S_T(n, a) \cong S_T(r, a/ \gcd(n, a))$, and by Lemmas~\ref{lem:main_pri} or \ref{lem:main_com1}, $S_T(n, a) \cong S_T(r, 1)$, because $\gcd( n/\gcd(n, a), a/\gcd(n, a))=1$.
\end{proof}

Therefore, it is sufficient to consider only the spatio-temporal pattern of a linear CA for the case of the seed $1$ as representative, without considering the spatio-temporal patterns for all seeds.


\section{Concluding Remarks}
\label{sec:con}

In this paper, we proved that it is sufficient to consider the initial configuration of linear CAs only when the seed is $1$.
In Section~\ref{subsec:pri}, we discussed that when the number of states is prime, the spatio-temporal patterns of a linear CA from the single site seed are isomorphic regardless of the seed.
Next, in the case where the number of states is composite, we proved the following two cases.
In Section~\ref{subsec:com1}, we showed that when the number of states is composite and the seed is relatively prime to the number of states, their spatio-temporal patterns are isomorphic to each other even if the seed is switched.
In Section~\ref{subsec:com2}, we showed that when the number of states is composite and the seed is not relatively prime to the number of states, there exist spatio-temporal patterns with a smaller number of states that are isomorphic to the spatio-temporal pattern.
In Section~\ref{subsec:all}, by integrating these three cases, we were able to show that for any spatio-temporal pattern from any seed with any number of states, there is always a spatio-temporal pattern from the seed $1$ isomorphic to it.

The authors have recently studied to represent and classify fractals generated from CAs by one-variable functions \cite{kawa2022, kawa2022p}.
Since the present results in this paper show that it is sufficient to consider only the case where the seed is $1$ for the orbits of linear CAs with any number of states, in future work, we plan to study one-variable functions representing fractals generated from CAs whose possible states are more than or equal to $3$.
We would like to find out what functions (e.g. pathological functions) emerge and use them to classify fractals. 
Furthermore, we would like to consider applying the results of this study to nonlinear CAs.
In the case of nonlinear CAs, due to the definition of transition rules, it is not possible to apply the present proof using the group theory for linear cases.
However, we would like to consider applying the partial results by attributing them to the linear case.

\section*{Acknowledgment}
This work was partly supported by a Grant-in-Aid for Scientific Research (22K03435) funded by the Japan Society for the Promotion of Science.

\section*{Data Availability Statement}
The data that supports the findings of this work are available within this paper.


\end{document}